\DeclareMathOperator{\im}{Im}
\DeclareMathOperator{\Id}{Id}
\DeclareMathOperator{\tr}{Tr}
\DeclareMathOperator{\diam}{diam}
\DeclareMathOperator{\Span}{Span}
\DeclareMathOperator{\SO}{SO}
\DeclareMathOperator{\PSL}{PSL}
\DeclareMathOperator{\Rm}{Rm}
\DeclareMathOperator{\vol}{vol}
\newcommand{\R}{\mathbb R}
\newcommand{\C}{\mathbb C}
\newcommand{\Z}{\mathbb Z}
\newcommand{\diff}{\text{\rm d}}
\newcommand{\del}{\partial}
\newcommand{\delb}{\bar{\del}}
\newcommand{\so}{\mathfrak{so}}
\renewcommand{\im}{\mathrm {im}\,}
\renewcommand{\P}{\mathbb P}
\theoremstyle{plain}
	\newtheorem{theorem}{Theorem}
	\newtheorem{proposition}[theorem]{Proposition}
	\newtheorem{lemma}[theorem]{Lemma}
	\newtheorem{corollary}[theorem]{Corollary}
\theoremstyle{definition}
	\newtheorem{definition}[theorem]{Definition}
	\newtheorem{remark}[theorem]{Remark}
\theoremstyle{plain}
	\newtheorem*{theorem*}{Theorem}
	\newtheorem*{proposition*}{Proposition}
	\newtheorem*{lemma*}{Lemma}
	\newtheorem*{corollary*}{Corollary}
	\newtheorem*{conjecture*}{Conjecture}
\theoremstyle{definition}
	\newtheorem*{definition*}{Definition}
	\newtheorem*{remark*}{Remark}
	\newtheorem*{remarks*}{Remarks}
\begin{document}

\title{Limits of Riemannian 4-manifolds and the symplectic geometry of their twistor spaces}

\author{Joel Fine\thanks{Département de mathématiques, Université libre de Bruxelles, Brussels 1050, Belgium.}}

\maketitle

\begin{abstract}The twistor space of a Riemannian 4-manifold carries two almost complex structures, $J_+$ and $J_-$, and a natural closed 2-form $\omega$. This article studies limits of manifolds for which $\omega$ tames either $J_+$ or $J_-$. This amounts to a curvature inequality involving self-dual Weyl curvature and Ricci curvature, and which is satisfied, for example, by all anti-self-dual Einstein manifolds with non-zero scalar curvature. We prove that if a sequence of manifolds satisfying the curvature inequality converges to a hyperkähler limit $X$ (in the $C^2$ pointed topology) then $X$ cannot contain a holomorphic 2-sphere (for any of its hyperkähler complex structures). In particular, this rules out the formation of bubbles modelled on ALE gravitational instantons in such families of metrics.  
\end{abstract}
%
%
%

\section{Introduction}

\subsection{Statement of the main result}

We begin by briefly recalling the relevant parts of twistor theory, referring to the references for details. Let $(M,g)$ be an oriented Riemannian 4-manifold. The twistor space of $M$ is the fibre bundle $\pi \colon Z \to M$ whose fibre over $x\in M$ is the 2-sphere of all almost complex structures on $T_xM$ which are orthogonal with respect to $g$ and compatible with the orientation on $M$. 

There are two almost complex structures, $J_+$ and $J_-$, on $Z$. The first, $J_+$, was introduced by Atiyah--Hitchin--Singer \cite{Atiyah:1978aa}, following Penrose \cite{Penrose:1967aa}. The second, $J_-$, was defined by Eells--Salamon \cite{Eells:1985aa}. To describe them note that the Levi-Civita connection of $(M,g)$ gives a hori\-zontal-vertical splitting of the tangent bundle to $Z$ which we write as $TZ = V \oplus H$. This splitting is invariant with respect to both $J_{\pm}$; on $V$, $J_\pm$ is equal to the usual complex structure of the 2-sphere; at $p \in Z$ we take $J_+$ to be equal to the complex structure on $T_{\pi(p)}M$ determined by $p$, under the identification $\pi_*\colon H_p \to T_{\pi(p)}M$. Taking the opposite sign on $H$ in this definition gives $J_-$. We remark in passing that the Atiyah--Hitchin--Singer almost complex structure $J_+$ is integrable when the metric is anti-self-dual \cite{Atiyah:1978aa}; by contrast $J_-$ is never integrable.

In \cite{Reznikov:1993aa} Reznikov observed that $Z$ also carries a natural closed 2-form $\omega$, whose restriction to each fibre of $\pi$ is the area form. Asking for $\omega$ to be symplectic gives a curvature inequality for $g$ which was first investigated by Reznikov \cite{Reznikov:1993aa} and later described explicitly in \cite{Fine:2009aa}. (In fact, this can be seen as a special case of the ``fat connections'' introduced much earlier by Weinstein \cite{Weinstein:1980aa}.) In this article we will focus on the case when $\omega$ tames $J_+$ or $J_-$. Recall that a 2-form $\omega$ tames an almost complex structure $J$ if it is positive on every $J$-complex line; when this happens $\omega$ is automatically non-degenerate. Asking for $\omega$ to tame either $J_+$ or $J_-$ gives another, stronger, curvature inequality, also described in \cite{Fine:2009aa} and which we now recall here.

To do so, we need the decomposition of a 4-dimensional curvature tensor. Write the curvature operator $\Rm \colon \Lambda^2 \to \Lambda^2$ in block form with respect to the decomposition $\Lambda^2 = \Lambda^+ \oplus \Lambda^-$ into self-dual and anti-self-dual forms:
\begin{equation}\label{curvature-decomposition}
\Rm = \left(
\begin{array}{cc}
A& B^*\\ B&  C
\end{array}
\right)
\end{equation}
where $A \colon \Lambda^+ \to \Lambda^+$, $B \colon \Lambda^+ \to \Lambda^-$ and $C \colon \Lambda^- \to \Lambda^-$. To relate this to the usual curvature decomposition, one identifies a trace-free endomorphism of $TM$ with an infinitesimal change of conformal class and hence a linear map $\Lambda^+ \to \Lambda^-$ giving the infinitesimal change of the bundle of self-dual 2-forms. Under this procedure, $B \colon \Lambda^+ \to \Lambda^-$ is identified with the trace-free Ricci curvature. Meanwhile $\tr A = \tr C = R/4$, where $R$ is the scalar curvature, and the trace free parts of $A$ and $C$ are the self-dual and anti-self-dual parts of the Weyl curvature. (See \cite{Atiyah:1978aa} for more details.)

\begin{theorem}[Theorem~4.4 of \cite{Fine:2009aa}]
Let $g$ be a Riemannian metric on an oriented 4-manifold. Suppose that 
for all non-zero $\theta \in \Lambda^+$, 
\begin{equation}
|\langle A(\theta), \theta \rangle | > |B(\theta)||\theta|
\label{twistor-tame-inequality}
\end{equation}
(and note that this implies that $A$ is invertible). There are two possibilities.
\begin{enumerate}
\item
If $\det(A)>0$ then the Reznikov 2-form $\omega$ tames $J_+$.
\item 
If $\det(A)<0$ then the Reznikov 2-form  $\omega$ tames $J_-$.
\end{enumerate} 
\end{theorem}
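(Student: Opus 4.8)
The statement is pointwise on $Z$, so the plan is to fix $p \in Z$ lying over $x \in M$, write $\omega(v, J_\pm v)$ for an arbitrary $v \in T_pZ$ in terms of the curvature at $x$, and show it is positive. Throughout I identify $p$ with the self-dual 2-form $\omega_p \in \Lambda^+_x$ determined by $\omega_p(\cdot,\cdot) = g(j_p\,\cdot\,,\cdot)$, where $j_p$ is the almost complex structure that $p$ represents; thus $p$ lies on the sphere $\{|\theta|^2 = 2\} \subset \Lambda^+_x$. The first task is to record the block form of $\omega$ in the splitting $T_pZ = V_p \oplus H_p$: its restriction to $V_p$ is the (positive) area form of the fibre, the subspaces $V_p$ and $H_p$ are $\omega$-orthogonal, and on $H_p \cong T_xM$ one has, up to a positive constant, $\omega(X,Y) = \langle \omega_p, F(X,Y)\rangle$, where $F \in \Omega^2(M;\Lambda^+)$ is the curvature of the induced connection on $\Lambda^+$. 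This is the minimal-coupling (``fat bundle'') description alluded to in the introduction. The key fact I would extract from it is that, under $\so(\Lambda^+)\cong\Lambda^+$, the curvature $F$ is exactly the self-dual output of the Riemann operator, i.e. $F(\psi) = A(\psi^+) + B^*(\psi^-)$ for $\psi = \psi^+ + \psi^- \in \Lambda^2$.

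With this in hand the taming condition decouples. Since $J_\pm$ preserves $V_p\oplus H_p$, acts as the fibre complex structure $J_{S^2}$ on $V_p$ and as $\pm j_p$ on $H_p$, the $\omega$-orthogonality of $V_p$ and $H_p$ annihilates every cross term and gives
\[
\omega(v, J_\pm v) = \omega(v^V, J_{S^2} v^V) + \langle \omega_p,\, F(X, \pm j_p X)\rangle, \qquad X := \pi_* v^H.
\]
The first summand is strictly positive whenever $v^V \neq 0$, so everything reduces to the sign of the horizontal term. Here I would invoke the elementary but crucial identity $(X \wedge j_p X)^+ = \tfrac12 |X|^2\,\omega_p$ (the self-dual part of $X\wedge j_pX$ always points along $\omega_p$ and depends only on $|X|$), together with $|(X \wedge j_p X)^-| = \tfrac12 |X|^2 |\omega_p|$. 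Substituting the formula for $F$ yields
\[
\langle \omega_p,\, F(X, j_p X)\rangle = \tfrac12 |X|^2 \langle A(\omega_p), \omega_p\rangle + \langle B(\omega_p),\, (X\wedge j_p X)^-\rangle,
\]
and a Cauchy--Schwarz bound on the second term shows this is at least $\tfrac12 |X|^2\big(\langle A(\omega_p),\omega_p\rangle - |B(\omega_p)||\omega_p|\big)$, with the sign-reversed analogue for $J_-$.

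The final step converts the hypothesis \eqref{twistor-tame-inequality} into a definite sign through the two cases. The inequality forces $\langle A(\theta),\theta\rangle \neq 0$ for every nonzero $\theta \in \Lambda^+_x$; since $A$ is symmetric (it is the self-dual block of the symmetric Riemann operator) and the unit sphere of the $3$-dimensional space $\Lambda^+_x$ is connected, $A$ must be either positive or negative definite, and these two alternatives are detected precisely by $\det(A)>0$ and $\det(A)<0$. If $\det(A)>0$ then $A$ is positive definite, so $\langle A(\omega_p),\omega_p\rangle = |\langle A(\omega_p),\omega_p\rangle| > |B(\omega_p)||\omega_p|$ by \eqref{twistor-tame-inequality}; the horizontal term for $J_+$ is then strictly positive for $v^H\neq 0$, and combined with the vertical term this gives $\omega(v,J_+v)>0$ for all $v\neq 0$, i.e. $\omega$ tames $J_+$. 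If $\det(A)<0$ then $A$ is negative definite, the horizontal contribution for $J_-$ equals $-\tfrac12|X|^2\langle A(\omega_p),\omega_p\rangle - \langle B(\omega_p),(X\wedge j_pX)^-\rangle$, and using $-\langle A(\omega_p),\omega_p\rangle = |\langle A(\omega_p),\omega_p\rangle| > |B(\omega_p)||\omega_p|$ the same estimate shows $\omega$ tames $J_-$.

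I expect the main obstacle to be assembling the correct block form of $\omega$: specifically, justifying that $V_p$ and $H_p$ are $\omega$-orthogonal and that the horizontal part is governed by the curvature of $\Lambda^+$ (equivalently the self-dual block $A$, together with $B$) rather than by the full Riemann tensor. Once the identity $(X\wedge j_pX)^+ = \tfrac12|X|^2\omega_p$ pins the leading horizontal contribution to the quadratic form $\theta\mapsto\langle A\theta,\theta\rangle$, the remaining Cauchy--Schwarz estimate and the $\det(A)$ dichotomy are routine.
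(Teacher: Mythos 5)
This theorem is quoted from \cite{Fine:2009aa} and the present paper contains no proof of it, but your argument is correct and is essentially the proof in that reference: block-diagonalise $\omega$ in the vertical--horizontal splitting, identify the horizontal block with $\langle \omega_p, A((X\wedge j_pX)^+) + B^*((X\wedge j_pX)^-)\rangle$ using the fact that the curvature of $\Lambda^+$ only sees the top row of the curvature operator, exploit $(X\wedge j_pX)^+ = \tfrac12|X|^2\omega_p$ together with $|(X\wedge j_pX)^-| = \tfrac12|X|^2|\omega_p|$, and finish with Cauchy--Schwarz plus the observation that \eqref{twistor-tame-inequality} forces $A$ to be definite, with the sign detected by $\det(A)$ since $\dim\Lambda^+ = 3$ is odd. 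The one point you leave to convention is the overall sign of the horizontal block of $\omega$, which is precisely what decides that $\det(A)>0$ pairs with $J_+$ rather than $J_-$; that is a finite computation to pin down, not a conceptual gap.
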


\begin{definition}
When a Riemannian 4-manifold satisfies inequality \eqref{twistor-tame-inequality} we say its \emph{twistor space is tamed}. 
\end{definition}

Important examples of metrics whose twistor spaces are tamed are given by anti-self-dual Einstein metrics whose scalar curvature $R$ is non-zero. For these metrics, $B=0$ and $A = \frac{R}{12}\Id$. The only possible compact examples with $R>0$ are the standard metrics on $S^4$ and $\C\P^2$ (a result of Hitchin \cite{Hitchin:1981aa}). Hyperbolic 4-manifolds and complex-hyperbolic surfaces (with the non-complex orientation) are also anti-self-dual and Einstein. These are the only \emph{known} compact examples with $R<0$. It is a long-standing open problem to decide if there exists a compact anti-self-dual Einstein metric with $R<0$ which is not locally homogeneous. If one allows orbifold singularities, there are \emph{many} examples, even with $R>0$. See for example \cite{Galicki:1988aa,Calderbank:2006aa}. Returning to smooth metrics, but dropping  compactness, there are many beautiful constructions of complete anti-self-dual Einstein metrics with $R<0$ which are not locally homogeneous. Even if one prescribes the geometry to be asymptotically hyperbolic or complex-hyperbolic, such metrics come in infinite dimensional families. See \cite{Calderbank:2004aa,Biquard:2000aa,Biquard:2002aa} for details. Another class of metrics with tamed twistor spaces are those with sectional curvatures which are pointwise $2/5$-pinched. When the curvature is positive, $J_+$ is tamed; when the curvature is negative $J_-$ is tamed (see Remark~3.8 of~\cite{Fine:2009aa}). 

When the twistor space of $g$ is tamed, one can use symplectic geometry and the theory of $J$-holomorphic curves in $(Z,\omega)$ to study the Riemannian geometry of $(M,g)$. The point of this article is to explore how this symplectic approach combines with the theory of convergence of Riemannian manifolds. Our main result rules out certain locally hyperkähler metrics arising as limits of metrics with tamed twistor spaces. (Recall that a locally hyperkähler metric is one whose universal cover is hyperkähler.)

\begin{theorem}\label{main-result}
Let $(M_i,g_i)$ be a sequence of Riemannian 4-manifolds with tamed twistor spaces. Suppose the sequence converges to a locally hyperkähler limit $(X,g)$ in the pointed $C^2$-topology. Then the universal cover of $X$ cannot contain a holomorphic 2-sphere (for any of its hyperkähler complex structures).
\end{theorem}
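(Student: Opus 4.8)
The plan is to argue by contradiction: assume the universal cover $\tilde X$ carries a $J_\lambda$-holomorphic $2$-sphere $C$ for some hyperkähler complex structure $J_\lambda$, lift $C$ into the twistor space, and compare symplectic areas along the sequence. First I would record the local geometry of $C$. Since $\tilde X$ is hyperkähler its canonical bundle is trivial, so adjunction forces $C^2=-2$, and as a holomorphic (hence minimal) rational curve $C$ has normal bundle $\O(-2)$ in $\tilde X$. Crucially the three Kähler forms are parallel, so $\Lambda^+$ is a \emph{flat} bundle over $\tilde X$ and every complex structure $J_\mu$ defines a parallel, horizontal section $s_{J_\mu}$ of $\pi\colon Z\to\tilde X$; the restriction $\hat C=s_{J_\lambda}(C)$ is then a $J_+$-holomorphic $2$-sphere in $Z_X$. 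The key computation is its $\omega_X$-area. Reznikov's form decomposes as a fibrewise area term plus a horizontal term built from the curvature of $\Lambda^+$ together with a cross term governed by the trace-free Ricci curvature $B$; in the hyperkähler limit $A=0$ and $B=0$, so $\Lambda^+$ is flat, the horizontal and cross terms vanish, and $\omega_X$ is the pure fibre-area form. Since $\hat C$ is horizontal it therefore has $\omega_X$-area $0$. Replacing $J_\lambda$ by $-J_\lambda$ if necessary (so that $s_{J_{-\lambda}}$ restricted to $C$ is $J_-$-holomorphic), the same construction produces a horizontal area-zero sphere holomorphic for whichever of $J_\pm$ we need.

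Next I would transport $\hat C$ to the sequence. On a fixed compact region of $\tilde X$ containing $C$, pointed $C^2$-convergence supplies diffeomorphisms $\phi_i$ with $\phi_i^*g_i\to g$ in $C^2$, whence the induced twistor data satisfy $\phi_i^*\omega_i\to\omega_X$ in $C^0$ and $\phi_i^*J_\pm^i\to J_\pm^X$ in $C^1$. The structural input I would use is that $\hat C$ is a \emph{rigid, unobstructed} rational curve: the normal sequence $0\to\O(-2)\to N_{\hat C/Z_X}\to\O\to 0$ is non-split, because $[C]$ does not remain of type $(1,1)$ for nearby hyperkähler complex structures, so $N_{\hat C/Z_X}=\O(-1)\oplus\O(-1)$ and $H^1(N_{\hat C/Z_X})=0$. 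An implicit function theorem for the $\bar\partial$-equation with the converging almost complex structures then yields genuine $J_\pm^i$-holomorphic spheres $\hat C_i\to\hat C$ in $Z_i$, with $\mathrm{Area}_{\omega_i}(\hat C_i)\to \mathrm{Area}_{\omega_X}(\hat C)=0$. After passing to a subsequence so that the tamed structure is always, say, $J_+^i$, the taming gives $\mathrm{Area}_{\omega_i}(\hat C_i)>0$; the goal is to upgrade this to a contradiction.

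The main obstacle is precisely this last step, and it is delicate because the taming \emph{degenerates} in the limit. Since $A_i\to 0$ and $B_i\to 0$ in $C^0$, the margin in the inequality $|\langle A_i(\theta),\theta\rangle|>|B_i(\theta)||\theta|$ collapses, so the taming constant $c_i$ tends to $0$; hence $\mathrm{Area}_{\omega_i}(\hat C_i)>0$ is \emph{a priori} consistent with $\mathrm{Area}_{\omega_i}(\hat C_i)\to 0$, and one cannot finish by a monotonicity estimate alone, whose threshold degenerates at the same rate as $c_i$. The one symplectic quantity that does not degenerate is the fibre area $\int_F\omega_i=4\pi$, pinned by the twistor construction. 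I therefore expect the heart of the proof to be a rescaling or neck-stretching argument that trades the collapse of $c_i$ for a non-degenerate limit: normalising $\|A_i\|$ should force the vanishing-area spheres either to absorb a full $4\pi$ of fibre area or to bubble off a non-constant $J$-holomorphic sphere in a genuinely tamed rescaled limit, each of which is incompatible with area tending to $0$. Establishing this dichotomy—controlling the interaction between the collapsing taming constant and the fixed fibre area, and excluding such bubbles—is where I expect essentially all of the difficulty to lie.
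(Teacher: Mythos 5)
Your setup is essentially the paper's: lift the sphere horizontally into twistor space, observe its $\omega$-area is zero, show the lift is a regular holomorphic curve, deform it to $J_i$-holomorphic spheres for the approximating metrics, and play positivity from taming against vanishing area. But you stop short of the actual contradiction and then propose to close the argument with a rescaling or neck-stretching analysis. This is where the gap lies: you treat the areas $\int_{\hat C_i}\omega_i$ only as quantities \emph{converging} to $0$, and correctly note that strict positivity is consistent with convergence to zero. The point you are missing is that these areas are \emph{exactly} zero for every $i$, by a purely cohomological argument. The Reznikov form is the curvature of a connection on the vertical tangent bundle $V\to Z$, so $[\omega_i]=e(V)$ is the Euler class, independent of $i$; in particular $[\omega_i]=[\omega]$ under the identifications furnished by the convergence (the fibrewise diffeomorphisms identify the vertical bundles). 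Since $\hat C_i$ is homotopic to $\hat C$, Stokes gives $\int_{\hat C_i}\omega_i=\int_{\hat C}\omega=0$, which contradicts $\int_{\hat C_i}\omega_i>0$ from taming immediately. No quantitative control of the degenerating taming constant, monotonicity threshold, or bubbling analysis is needed; the entire difficulty you locate in your last paragraph evaporates once you use that $\omega_i$ is closed with fixed cohomology class.

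A secondary issue is your regularity argument. You deduce $N_{\hat C/Z}=\O(-1)\oplus\O(-1)$ from non-splitting of the normal bundle sequence, which presupposes that the twistor almost complex structure is integrable. That is fine for $J_+$ on a hyperk\"ahler (hence anti-self-dual) manifold, but the sequence may have $J_-$ tamed, and $J_-$ is never integrable, so there is no holomorphic normal bundle to speak of. The paper instead works directly with the linearised Cauchy--Riemann operator on $u^*TZ$: it shows every infinitesimal $J_\pm$-holomorphic deformation of the horizontal lift projects to a deformation of $f$ in $X$ with the complex structure on $X$ fixed, and since the normal bundle of the $-2$-curve downstairs is $\O(-2)$ these are only the reparametrisations by $\PSL(2,\C)$; the kernel therefore has dimension $6$, equal to the index (using $u^*c_1=0$ for both $J_+$ and $J_-$), forcing surjectivity. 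You should replace your normal-bundle computation with an argument of this kind that treats both twistor almost complex structures uniformly.
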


Note that locally hyperkähler metrics lie in the maximally degenerate ``boundary component'' of inequality \eqref{twistor-tame-inequality}, with $A = B = 0$. In this case, at least on the universal cover, $\omega$ is the pull-back of the area form on $S^2$ via the projection $Z \to S^2$ given by the hyperkähler complex structures, and so is visibly degenerate.

\subsection{Applications}

Before we give the proof in \S\ref{proof-of-main-result}, we first discuss some potential applications. There are certain situations in which the well-established theory of convergence shows that the only way a family of Riemannian manifolds can fail to be compact is when they bubble off an ALE gravitational instanton. These bubbles automatically contain a holomorphic $-2$-curve (as is proved in the course of Kronheimer's classification \cite{Kronheimer:1989aa}). It follows that when we add the hypothesis that the family of metrics all have tamed twistor spaces, Theorem~\ref{main-result} allows us to conclude the family is actually compact. 

As a concrete example, we give the following corollary of our main result.

\begin{corollary}\label{compact-asdE}
Let $(M_i, g_i)$ be a sequence of compact oriented 4-manifolds with anti-self-dual Einstein metrics of non-zero scalar curvature, normalised so that $|R(g_i)|=1$. Suppose that 
\begin{enumerate}
\item
The diameter $\diam(M_i,g_i) \leq D$ is uniformly bounded.
\item
The total volume $\vol(M_i, g_i) \geq V >0$ is uniformly bounded away from zero.
\end{enumerate}
Then a subsequence converges in the $C^\infty$ topology to an anti-self-dual Einstein metric $(M,g)$ of non-zero scalar curvature. 
\end{corollary}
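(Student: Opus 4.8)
The plan is to run the established convergence theory for Einstein 4-manifolds and then use Theorem~\ref{main-result} to eliminate the only possible mode of degeneration. First I would record that the hypotheses of that theory are met. Each $g_i$ is anti-self-dual Einstein with $R(g_i)=\pm1$, so $B=0$ and $A=\tfrac{R}{12}\Id$; hence \eqref{twistor-tame-inequality} holds and every twistor space is tamed. The Einstein equation gives the uniform bound $\Ric(g_i)=\pm\tfrac14 g_i$, and combined with $\diam\le D$ the Bishop--Gromov inequality yields a uniform upper bound on $\vol(M_i,g_i)$; together with $\vol\ge V$ this produces uniform non-collapsing of unit balls. In dimension four these are precisely the conditions under which the convergence theory of Anderson, Bando--Kasue--Nakajima, Tian and Cheeger--Tian (with the Cheeger--Naber curvature estimate) applies, and it already supplies the a priori $L^2$ bound on curvature that one needs in the non-collapsed setting.

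I would then invoke the structure theorem: after passing to a subsequence, on which the sign of $R$ is constant, the $(M_i,g_i)$ converge in the pointed Gromov--Hausdorff sense to a compact Einstein orbifold $(X,g)$ with finitely many singular points, the convergence being $C^\infty$ on compact subsets of the smooth locus. Either (a) there are no singular points, or (b) a singularity forms, in which case the usual blow-up analysis produces basepoints $x_i\in M_i$ and scales $\lambda_i\to\infty$ such that $(M_i,\lambda_i^2 g_i,x_i)$ converges in the pointed $C^\infty$ topology to a complete, non-flat, Ricci-flat ALE 4-manifold $X_0$.

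The heart of the argument is to rule out (b). Since anti-self-duality is scale invariant and the inequality \eqref{twistor-tame-inequality} is homogeneous, each rescaled metric $\lambda_i^2 g_i$ again has tamed twistor space. Its limit $X_0$ is Ricci-flat (the Einstein constant scales to zero) and anti-self-dual, hence hyperk�hler; being non-flat and ALE it is a gravitational instanton, and Kronheimer's classification \cite{Kronheimer:1989aa} shows it contains a holomorphic 2-sphere, namely a $(-2)$-curve. As $X_0$ is simply connected it is its own universal cover, so Theorem~\ref{main-result}, applied to the tamed sequence $(M_i,\lambda_i^2 g_i,x_i)$ with hyperk�hler limit $X_0$, forbids exactly such a sphere. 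This contradiction shows (b) is impossible, so (a) holds: the convergence is $C^\infty$, the $M_i$ are eventually diffeomorphic to a fixed closed 4-manifold $M$, and the limit $g$ is anti-self-dual Einstein with $|R(g)|=1\ne0$ by $C^\infty$-convergence of the sign-fixed scalar curvatures.

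The substantive input here is the four-dimensional convergence theory, which I would use as a black box; the genuinely new content is that Theorem~\ref{main-result} closes off the last remaining degeneration. The step demanding the most care is (b): one must ensure that a failure of smooth convergence really produces a \emph{non-flat} Ricci-flat ALE bubble, so that Kronheimer's $(-2)$-curve is actually present and the contradiction with Theorem~\ref{main-result} is genuine. A secondary point to verify is that the bubble, as an anti-self-dual Ricci-flat 4-manifold (so that $A=0$ and $B=0$ and the curvature lies entirely in the $\Lambda^-$ block, forcing the self-dual 2-forms to be parallel), is indeed hyperk�hler for the relevant orientation, which is what identifies it with one of Kronheimer's instantons.
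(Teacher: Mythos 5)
Your proposal is correct and follows essentially the same route as the paper: invoke the Anderson/Bando--Kasue--Nakajima/Cheeger--Naber orbifold compactness theory, observe that anti-self-duality forces any non-flat Ricci-flat ALE bubble to be (locally) hyperk\"ahler, extract Kronheimer's $-2$-curve, and contradict Theorem~\ref{main-result} applied to the rescaled (still tamed, by scale invariance) sequence. The only quibble is that you assert the bubble $X_0$ is simply connected, which is not automatic; the paper instead says it is \emph{locally} hyperk\"ahler with universal cover a gravitational instanton, which is exactly the case Theorem~\ref{main-result} is phrased to handle.
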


The point is that under these conditions, work of Anderson, Bando--Kasue--Nakajima and Cheeger--Naber shows that a sequence of Einstein 4-manifolds converges to an \emph{orbifold} with isolated singularities \cite{Anderson:1990aa,Nakajima:1988aa,Bando:1989aa,Cheeger2015Regularity-of-E}.  Moreover, rescaling near the orbifold points one sees the singularity is modelled on an ALE Ricci-flat space which is not simply flat. Now the additional hypothesis that $g_i$ be anti-self-dual ensures that the ALE model is actually locally hyperkähler, with universal cover an ALE gravitational instanton. From here, Corollary~\ref{compact-asdE} follows from Theorem~\ref{main-result} and the fact that all non-flat ALE gravitational instantons contain $-2$-curves \cite{Kronheimer:1989aa}.

The condition in Corollary~\ref{compact-asdE} that the manifolds $(M_i, g_i)$ be compact is not necessary to apply the general convergence theory, it just makes for a clean statement. The same ideas apply to, for example, anti-self-dual Poincaré--Einstein metrics. These are complete anti-self-dual Einstein metrics on the interior of a compact manifold with boundary and which are asymptotically hyperbolic as one approaches the boundary. As mentioned above, such metrics come in infinite dimensional families. Theorem~\ref{main-result} ensures that these families cannot develop isolated orbifold singularities modelled on gravitational instantons. 

Corollary~\ref{compact-asdE} is related to work of Biquard~\cite{Biquard:2013aa}, which considers a family $(M_i, g_i)$ of Einstein 4-manifolds which converges to an orbifold $(M,g)$ with an isolated $\Z_2$-orbifold point at $p$. He makes an additional assumption that for large $i$, $(M_i,g_i)$ is well approximated by gluing in an appropriately scaled copy of the Eguchi--Hanson metric at $p$. In this situation, Biquard proves that at $p$, the part of the Riemann curvature operator of the limit metric $g$ which maps $\Lambda^+ \to \Lambda^+$ must  have a kernel. (The kernel corresponds to the complex structure at $p$ used to glue the Eguchi--Hanson metric there.) This can be used to prove an analogue of Corollary~\ref{compact-asdE}: if, in addition to Biquard's hypotheses we assume that the $(M_i,g_i)$ are anti-self-dual, with $|R(g_i)|=1$, then the limit metric $(M,g)$ is also anti-self-dual with $|R(g)|=1$. But then the curvature map $\Lambda^+ \to \Lambda^+$ is a non-zero multiple of the identity and so no such orbifold singularity occurs. 

There are two ways in which Corollary~\ref{compact-asdE} is more general than the sort of compactness result which can be obtained by arguing directly via Biquard's result. Firstly, \cite{Biquard:2013aa} considers only $\Z_2$-singularities resolved by the Eguchi--Hanson metric. This is most likely for simplicity of presentation; it seems certain that an analogous result could be proved for any resolution involving an ALE gravitational instanton. The second way is more significant. Biquard must assume that the degeneration occurs in a way tightly modelled by a specific gluing of Eguchi--Hanson. This is by no means guaranteed by the general convergence theory. 

Another situation in which one might hope to apply Theorem~\ref{main-result} is the following. It is an interesting open problem to construct a compact 4-manifold with an anti-self-dual Einstein metric of negative scalar curvature which is not simply a quotient of hyperbolic or complex-hyperbolic space. One might imagine attacking this with a continuity method: given a hyperbolic 4-manifold $(M,g)$ with a cone angle $2 \pi \beta$ along a surface $S \subset M$ (with $\beta <1$) is it possible to deform the metric, keeping it anti-self-dual and Einstein with negative scalar curvature, but ``opening out'' the cone, taking $\beta$ to 1? The crux to carrying out this plan is to control singularity formation as $\beta$ increases. Theorem~\ref{main-result} rules out isolated orbifold singularities forming in the part $M\setminus S$ of the manifold where the metric is smooth. 

\begin{remark}
The Riemannian convergence results mentioned apply more generally to manifolds with bounded Ricci curvature (as opposed to Einstein metrics), but the convergence is then $C^{1,\alpha}$ (as opposed to $C^\infty$). This is slightly too weak to apply Theorem~\ref{main-result}, which requires $C^2$-convergence. The problem in the proof given below occurs in Proposition~\ref{perturbing-regular-curves}. $C^{1,\alpha}$-convergence of the metrics gives only $C^{0,\alpha}$-converg\-ence of the twistor almost complex structures, whereas the proof of Proposition~\ref{perturbing-regular-curves} requires $C^1$-convergence. It would be interesting to know if an alternative argument can be used to overcome this seemingly technical shortfall.
\end{remark}

\subsection{Acknowledgements} 

I would like to thank Olivier Biquard, Gilles Carron, Claude LeBrun, Dmitri Panov and Misha Verbitsky for helpful conversations during the course of this work. I would also like to thank the anonymous referee for important observations on the first draft of this article. This research is supported by  ERC consolidator grant 646649 ``SymplecticEinstein'' and FNRS grant MIS F.4522.15.

\section{Proof of Theorem~\ref{main-result}}\label{proof-of-main-result}

In outline, the proof is simple. We give the sketch first and then fill in the details. First, pass to a subsequence so that the twistor spaces of $(M_i,g_i)$ all have either $J_+$ tamed or all have $J_-$-tamed. Write $Z$ for the twistor space of the limit $X$ and suppose for simplicity that $X$ is genuinely hpyerkähler (rather than just locally hyperkähler). The twistor spaces of the $g_i$ give a sequence $(J_i, \omega_i)$ of almost complex structures with taming symplectic forms on $Z$ which converge to the structures $(J, \omega)$ defined by the hyperkähler limit. (For a precise statement, see Lemma~\ref{twistor-spaces-converge} below.) Here $J$ is either $J_+$ or $J_-$, depending on the subsequence we chose and $\omega$ is the degenerate closed 2-form given by pulling back the area form from $S^2$ via the projection $Z \to S^2$. It will be important that all the forms $\omega_i$ lie in the same cohomology class as $\omega$.  

Now assume for a contradiction that there is a 2-sphere $S \subset X$ which is $I$-holomorphic for one of the hyperkähler complex structures $I$ on $X$. The horizontal copy of $S$ in $Z$, lying in the fibre of $Z \to S^2$ corresponding to $I$, is a $J$-holomorphic curve. Notice that the integral of $\omega$ over this lift is zero, since $\omega$ vanishes horizontally. One can show that this horizontal copy of $S$ is a \emph{regular} $J$-holomorphic curve, in the sense that the linearised Cauchy--Riemann operator is surjective. (See Corollary~\ref{spheres-are-regular} below.) The implicit function theorem can then be used to show that for large $i$ there is a $J_i$-holomorphic curve $S_i$, homotopic to the horizontal copy of $S$. (This is proved in Proposition~\ref{perturbing-regular-curves}.) Since $\omega_i$ tames $J_i$, it must have strictly positive integral over $S_i$. On the other hand, the $\omega_i$ are all cohomologous to $\omega$, so this integral vanishes for all $i$, giving a contradiction.

This section is organised as follows. In \S\ref{define-symplectic-form} we recall the definition of the closed 2-form on twistor space. We then explain how for a convergent sequence of Riemannian 4-manifolds, the closed 2-forms and almost complex structures on their twistor spaces converge. In \S\ref{deforming-regular-curves} we discuss the above mentioned application of the implicit function theorem, that given two nearby almost complex structures $J$ and $J'$, a regular $J$-holomorphic curve can be deformed to a $J'$-holomorphic curve. In \S\ref{regularity-proof} we show that a holomorphic 2-sphere in a hyperkähler manifold lifts to a \emph{regular} holomorphic 2-sphere in the twistor space. Finally, in \S\ref{details} we put the pieces together to complete the proof of Theorem~\ref{main-result}.

\subsection{From convergent metrics to convergent twistor spaces}\label{define-symplectic-form}

We begin by recalling the construction of the natural closed 2-form on twistor space. It is actually just as easy to consider a more general situation. Let $E\to M$ be an $\SO(3)$-vector bundle, $A$ a compatible connection and $Z \subset E$ the unit sphere bundle. We will define a closed 2-form on $Z$ whose restriction to each fibre is the area form. (This is a special case of a construction involving bundles of integral coadjoint orbits and which goes back at least as far as Weinstein's article \cite{Weinstein:1980aa}.) To do so we will generalise the following way of producing the area form on $S^2 \subset \R^3$. The tangent bundle is a sub-bundle of the trivial bundle: $TS^2 \subset S^2 \times \R^3$. The orthogonal projection of the product connection from $S^2 \times \R^3$ to $TS^2$ gives the Levi-Civita connection, whose curvature is equal to the area form $\omega_{S^2}$. 

We now carry out this same construction simultaneously on the fibres of $\pi \colon Z \to M$. Write $V = \ker \diff \pi \subset TZ$ for the vertical tangent bundle. We have $V \subset \pi^*E$ (just as $TS^2 \subset S^2 \times \R^3$) and we can use orthogonal projection of the connection $\pi^*A$ in $\pi^*E$ to produce a connection $\nabla$ in $V$. By construction, the restriction of $\nabla$ to each fibre of $\pi$ agrees with the Levi-Civita connection. Hence the curvature of $\nabla$ defines a closed 2-form $\omega$ on $Z$ whose restriction to each fibre is the area form. (Strictly speaking, $F_\nabla$ is an $\so(2)$-valued 2-form and to obtain a real 2-form we must orient the fibres of $V$.) Notice that, by Chern--Weil, $[\omega] = e(V)$ is the Euler class of $V \to Z$ (whose definition also requires that $V$ be oriented). 

Given  an oriented Riemannian 4-manifold $(M,g)$ we carry out this construction with $E = \Lambda^+$ and $A$ the Levi-Civita connection to obtain a closed 2-form $\omega$ on the twistor space $Z$. (This particular case of the construction was first considered explicitly by Reznikov \cite{Reznikov:1993aa}.) With this discussion behind us, we now consider what happens for convergent sequences of Riemannian manifolds. Let $(M_i, g_i)$ be a sequence of oriented Riemannian 4-manifolds which converge in the $C^2$ pointed topology to a limiting manifold $(X,g)$. Let $K \subset X$ be any compact subset. We fix attention momentarily on one of the twistor almost complex structures $J_+$ or $J_-$ (the argument being identical for either choice). Write $(Z, J, \omega) \to K$ and $(Z_i, J_{Z_i}, \omega_{Z_i}) \to M_i$ for the twistor spaces with their corresponding almost complex structures and closed 2-forms.

\begin{lemma}\label{twistor-spaces-converge}
In the situation of the previous paragraph, there exist maps $\phi_i \colon Z \to Z_i$, each of which is a diffeomorphism onto its image and covers a map $f_i \colon K \to M_i$, such that the sequence $(J_i, \omega_i) = \phi^*_i(J_{Z_i}, \omega_{Z_i})$ of structures on $Z$ satisfies the following.
\begin{enumerate}
\item
$J_i \to J$ in $C^1$.
\item
$[\omega_i] = [\omega]$ for all $i$.
\end{enumerate}
\end{lemma}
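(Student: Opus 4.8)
The plan is to construct the maps $\phi_i$ explicitly from the convergence data and then track how the twistor data depends on the metric. Recall that $C^2$ pointed convergence means that for each compact $K \subset X$ there are, for large $i$, embeddings $f_i \colon K \to M_i$ such that $f_i^* g_i \to g$ in $C^2$ on $K$. The key geometric observation is that the twistor space $Z \to M$, as a \emph{smooth} fibre bundle, depends only on the conformal class of $g$ (indeed only on the oriented metric up to scale, since it is the sphere bundle of $\Lambda^+$), whereas the almost complex structures $J_\pm$ and the $2$-form $\omega$ depend on the metric through the Levi-Civita connection. So the first step is to define $\phi_i$ as the bundle map covering $f_i$ induced by the isometry $f_i^* \colon (\Lambda^+_{g}, \cdot) \to (\Lambda^+_{g_i}, \cdot)$ over $K$; concretely, identify the unit sphere bundle of $\Lambda^+_{f_i^* g_i}$ with that of $\Lambda^+_g$ using a metric comparison (e.g. the endomorphism carrying $g$ to $f_i^* g_i$ on $\Lambda^+$, normalised to act on the sphere bundles), which is well-defined and smooth since both metrics are positive. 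Each such $\phi_i$ is a diffeomorphism onto its image $Z|_{f_i(K)} \subset Z_i$ by construction.

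Next I would establish the $C^1$-convergence $J_i \to J$ in item (1). The point is that $J_+$ and $J_-$ are built pointwise and algebraically from (i) the horizontal-vertical splitting $TZ = H \oplus V$ and (ii) the fibrewise complex structure on $V$ together with the tautological complex structure on $H$. The vertical structure on $V$ is purely algebraic in the metric on $\Lambda^+$, so it converges in $C^2$ along with $g_i$. The horizontal distribution $H$ is the kernel of the connection $1$-form of the Levi-Civita connection on $\Lambda^+$, and the Christoffel symbols of the Levi-Civita connection depend on \emph{first} derivatives of the metric. Hence $C^2$-convergence $f_i^* g_i \to g$ yields $C^1$-convergence of the connection coefficients, hence $C^1$-convergence of $H$ as a subbundle of $TZ$, and therefore $C^1$-convergence of the full almost complex structures $J_i \to J$. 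This is exactly the point flagged in the paper's remark: one derivative is lost passing from the metric to the connection, so $C^2$ on $g$ gives $C^1$ on $J$, no better.

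For item (2) I would use the topological nature of the cohomology class. By the Chern--Weil computation recalled in \S\ref{define-symplectic-form}, $[\omega] = e(V)$ is the Euler class of the oriented vertical bundle $V \to Z$, which is a purely topological invariant of the smooth bundle $Z \to M$ and its fibre orientation. Since $\phi_i \colon Z \to Z_i$ is a smooth bundle isomorphism onto its image carrying the oriented vertical bundle of $Z$ to that of $Z_i$ (both being the tangent-along-the-fibres of the respective twistor fibrations), we get $\phi_i^* e(V_{Z_i}) = e(V)$, and likewise $e(V) = [\omega]$ for the limit structure. Therefore $[\omega_i] = \phi_i^*[\omega_{Z_i}] = \phi_i^* e(V_{Z_i}) = e(V) = [\omega]$ for every $i$, which is the required equality of classes.

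The main obstacle is the derivative-counting in item (1): one must verify carefully that the comparison diffeomorphisms $\phi_i$ can be chosen to depend on the metric only through algebraic (zeroth-order) operations on $\Lambda^+$, so that no spurious derivative loss is introduced beyond the unavoidable one coming from the connection. Everything else is essentially formal: item (2) is topological once $\phi_i$ is a genuine bundle isomorphism, and the construction of $\phi_i$ is a routine metric comparison. I would therefore spend most of the effort making precise the statement ``$J_\pm$ is a $C^0$-algebraic function of $g$ and a $C^0$-algebraic function of the connection coefficients,'' from which the convergence rate follows immediately.
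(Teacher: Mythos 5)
Your proposal is correct and follows essentially the same route as the paper: construct $\phi_i$ from a purely algebraic (zeroth-order) comparison of the bundles $\Lambda^+_g$ and $\Lambda^+_{f_i^*g_i}$ normalised to the sphere bundles, deduce $C^1$-convergence of $J_i$ from the fact that the Levi-Civita connection is first order in the metric, and obtain $[\omega_i]=[\omega]$ by identifying both classes with the Euler class of the vertical bundle, preserved because $\phi_i$ is a fibrewise diffeomorphism. The only cosmetic difference is that the paper realises the algebraic comparison as the projection $\Lambda^+_g \to \Lambda^+_{h_i}$ along $\Lambda^-_{h_i}$ rescaled to the unit sphere bundle, rather than your metric-comparison endomorphism, but both serve the same purpose.
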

\begin{proof}
By definition of pointed convergence, there exists a sequence of maps $f_i \colon K \to M_i$, for which the metrics $h_i = f_i^*g_i$ converge in $C^2$ to $g$. Notice that the twistor space of $(K, h_i)$ is $f_i^*Z_i$. The map $\Lambda^2 \to \Lambda^+_{h_i}$ given by projecting against $\Lambda^-_{h_i}$ restricts to an isomorphism $\psi_i \colon \Lambda^+_{g} \to \Lambda^+_{h_i}$. Of course, $\psi_i$ is not an isometry and so the image under $\psi_i$ of the unit sphere bundle is not the twistor space of $h_i$, but by rescaling we obtain a diffeomorphism $\psi_i/|\psi_i|$ between $Z$ and the twistor space of $h_i$, and hence a fibrewise diffeomorphism $\phi_i \colon Z \to Z_i$ covering $f_i$.  

To prove $J_i \to J$, we first recall the definition of the twistor almost complex structures. The Levi-Civita connection of $g$ gives a vertical-horizontal decomposition $TZ = V \oplus H$; on $V$, $J$ is the standard complex structure on $S^2$, whilst on $H$, at the point $p \in Z$, $J$ is either plus or minus the almost complex structure on $H \cong T_{\pi(p)}Z$ corresponding to $p$, depending on whether we are talking about $J_+$ or $J_-$. 

The map $\psi_i \colon \Lambda_g^+ \to \Lambda_{h_i}^+$ depends algebraically on $h_i$, whilst the Levi-Civita connection is first order in $h_i$. Since $h_i \to g$ in $C^2$, the pull-back to $Z$ by $\psi_i/|\psi_i|$ of the Levi-Civita connection from the twistor space of $h_i$ converges in $C^1$ to the Levi-Civita connection of $g$. Moreover, on each fibre $\psi_i/|\psi_i|$ converges to the identity in $C^2$ and its derivative is $C^1$-close to being an isometry. This means that the pull-back of the twistor almost complex structure converges in $C^1$.

Finally note that $[\omega] = e(V)$, whilst $[\omega_i] = e(\phi_i^*V_i)$, where $V_i \to Z_i$ is the vertical tangent bundle. Since $\phi_i$ is a fibrewise diffeomorphism, its derivative in the vertical directions $\phi_* \colon V \to \phi_i^*V_i$ is an isomorphism and so $e(\phi_i^*V_i) =e(V)$ as claimed.
\end{proof}

\subsection{Deforming regular $J$-holomorphic curves}\label{deforming-regular-curves}

In this section we prove that regular $J$-holomorphic curves persist under deformation of the almost complex structure. This is a standard fact well-known to experts and variants of Proposition~\ref{perturbing-regular-curves} appear in many places in the literature. Unfortunately it has been impossible to track down exactly the version which is needed for the proof of Theorem~\ref{main-result} and so we give  it here. 
 
\begin{proposition}\label{perturbing-regular-curves}
Let $(Z,J)$ be an almost complex manifold and $u \colon (\Sigma,j) \to (Z,J)$ a regular $J$-holomorphic curve. There exists $\epsilon>0$ such that if $J'$ is another almost complex structure on $Z$ with $\|J - J'\|_{C^1}< \epsilon$ then there is a map $u' \colon \Sigma \to Z$, homotopic to $u$ and which is $J'$-holomorphic. (Here the $C^1$-norm is defined with respect to any auxiliary choice of metric on $Z$.)
\end{proposition}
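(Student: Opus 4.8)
The plan is to set up the standard implicit function theorem argument for perturbing $J$-holomorphic curves, working in appropriate Sobolev (or Hölder) completions of the space of maps homotopic to $u$. The key point to exploit is the hypothesis that $u$ is \emph{regular}, meaning the linearised Cauchy--Riemann operator is surjective, and that $J'$ is $C^1$-close to $J$.

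\medskip

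First I would fix a Sobolev exponent $p>2$ and a regularity $k\geq 1$, and consider the Banach manifold $\mathcal{B} = W^{k+1,p}(\Sigma, Z)$ of maps near $u$, together with the Banach bundle $\mathcal{E} \to \mathcal{B}$ whose fibre over a map $v$ is $W^{k,p}(\Sigma, \Lambda^{0,1}_J \otimes v^*TZ)$. The nonlinear Cauchy--Riemann operator $\delb_J(v) = \tfrac12(dv + J \circ dv \circ j)$ is a smooth section of this bundle, and $u$ is a zero of it. Regularity of $u$ says precisely that the vertical derivative $D_u = (D\delb_J)_u$, the linearised operator, is surjective as a map $W^{k+1,p}(\Sigma, u^*TZ) \to W^{k,p}(\Sigma, \Lambda^{0,1}_J\otimes u^*TZ)$. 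Since $D_u$ is Fredholm, surjectivity gives a bounded right inverse $Q$ with $D_u Q = \Id$.

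\medskip

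Next I would treat $J'$ as a perturbation and solve $\delb_{J'}(v)=0$ near $u$. The subtlety is that $\delb_{J'}$ is a section of a \emph{different} bundle $\mathcal{E}'$, built from $\Lambda^{0,1}_{J'}$ rather than $\Lambda^{0,1}_J$; since $\|J-J'\|_{C^1}$ is small, the projection $\pi^{0,1}_{J'}$ onto the $J'$-antilinear part is $C^0$-close to $\pi^{0,1}_J$ and I can use it to identify the two bundles over a neighbourhood of $u$, so that $\delb_{J'}$ may be regarded as a section of $\mathcal{E}$. Writing $v = \exp_u(\xi)$ for $\xi \in W^{k+1,p}(\Sigma, u^*TZ)$ small, the equation becomes $F(\xi, J') = 0$ where $F(0,J)=0$ and $(D_\xi F)_{(0,J)} = D_u$. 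Because $J \mapsto \delb_{J}(u)$ depends continuously (indeed smoothly) on $J$ in the operator norms dictated by the $C^1$-topology on $J$, the term $F(0,J') = \delb_{J'}(u)$ is small in $W^{k,p}$ when $\|J-J'\|_{C^1}$ is small. I would then solve by a contraction-mapping / Newton iteration scheme: seek $\xi = Q\eta$ with $\delb_{J'}(\exp_u(Q\eta)) = 0$, which reduces to a fixed-point equation $\eta = \Phi(\eta)$ whose contraction constant is controlled by $\|Q\|$ and the quadratic estimate for $\delb_{J'}$, and whose source term is $O(\|J-J'\|_{C^1})$. For $\epsilon$ small enough the iteration converges to a genuine $J'$-holomorphic map $u'=\exp_u(\xi)$; elliptic regularity bootstraps $u'$ to be smooth. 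Since $\xi$ is small in $W^{k+1,p}\hookrightarrow C^0$, the map $u'$ is $C^0$-close to $u$ and hence homotopic to it via $t\mapsto \exp_u(t\xi)$.

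\medskip

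The main obstacle is the bookkeeping of why only $C^1$-closeness of $J'$ to $J$ is needed, rather than a stronger norm. This is exactly where the stated regularity of the map matters: the linearised operator $D_u$ involves one derivative of $J$ (through the connection/torsion terms in its zeroth-order part), so controlling $D_{u}-D'_{u}$ and the discrepancy $\delb_{J'}(u)$ in the relevant operator norms requires $J'$ to be close to $J$ in $C^1$ but no more. I would need to verify carefully that the quadratic remainder estimate for $\delb_{J'}$ and the bound on the right inverse $Q$ are uniform as $J'$ ranges over a $C^1$-neighbourhood of $J$; this uniformity, combined with smallness of the inhomogeneous term, is what makes the contraction mapping close. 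Everything else --- Fredholmness, existence of $Q$, elliptic bootstrapping --- is standard, so I expect the real care to lie in tracking these norms and in the identification of the two $(0,1)$-bundles.
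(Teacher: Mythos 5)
Your proposal is correct and follows essentially the same route as the paper: parametrise nearby maps by $\exp_u(\xi)$ in a $W^{1,p}$ chart, use the surjectivity of the linearised operator at the regular curve $u$ to invoke the implicit function theorem (equivalently your contraction scheme), and observe that the discrepancy between the $J$- and $J'$-operators, together with its derivative, is controlled by $\|J'-J\|_{C^1}$. The only cosmetic difference is how the two target bundles are compared --- you identify $\Lambda^{0,1}_{J'}$ with $\Lambda^{0,1}_J$ via the projection, while the paper embeds both operators into the larger space $L^p(\Lambda^1\otimes_\R u^*TZ)$ --- but this does not change the substance of the argument.
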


\begin{proof}
We use the set-up described in McDuff--Salamon's book \cite{McDuff:2012aa}, which we refer to for more details. For any given almost complex structure $J$ on $Z$, we will define a smooth map $F_J$ of Banach spaces whose zeros correspond to $J$-holomorphic curves. To do this, first pick a connection $\nabla$ in $TZ$ and use $\nabla$-geodesics to define an exponential map $\exp \colon TZ \to Z$. Given a map $v \colon (\Sigma, j) \to Z$, we write $\delb_{J}(v) = \frac{1}{2}\left(\diff v + J(v) \circ \diff v \circ j\right)$. This is a section of $\Lambda^{0,1} \otimes v^*TZ$, where we use $J\circ v$ as the fibrewise complex structure in $v^*TZ$ in this tensor product. Now fix  $p>2$ (so that the Sobolev space of $W^{1,p}$ sections over $\Sigma$ embeds in $C^0$) and define a map
\begin{gather*}
F_J \colon W^{1,p}(\Sigma, u^*TZ) \to L^p(\Sigma, \Lambda^{0,1}\otimes u^*TZ)\\
F_J(\xi) = \Phi_\xi^{-1} \left(\delb_{J} (\exp \xi)\right)
\end{gather*}
Here $\Phi_\xi \colon u^*TZ \to \exp(\xi)^*TZ$ is the isomorphism given by parallel transport along the $\nabla$-geodesics joining $u$ and $\exp(\xi)$. One checks that $F_J$ is a smooth map of Banach spaces. (Strictly speaking, since we do not assume $Z$ is complete, we should restrict to domain of $F$ to a neighbourhood of the origin for which $\exp(\xi)$ makes sense.)

The result will be proved by finding $\xi$ such that $F_{J'}(\xi) = 0$, since then $\delb_{J'}(\exp \xi) = 0$ and, by elliptic regularity, $\exp \xi$ is the smooth $J'$-holomorphic curve we seek. To find such a $\xi$ we will show that $F_{J'}-F_{J}$ is controlled by $\|J - J'\|_{C^1}$. This, together with the hypothesis that $u$ is a regular $J$-holomorphic curve, will enable us to apply the implicit function theorem to $F_{J'}$ to find a zero. 

First note that $F_J$ and $F_{J'}$ take values in different spaces (since $u^*TZ$ has different fibrewise complex structures). To compare them, we will regard them as mapping into the larger space $L^p(\Lambda^1\otimes_\R u^*TZ)$. Direct from the definition we have
\begin{equation}\label{J-J'-difference}
(F_{J'} -F_J)(\xi) = \frac{1}{2}\Phi_{\xi}^{-1} \left((J' - J)(\exp \xi) \circ \diff (\exp \xi) \circ j\right)
\end{equation}
We restrict the domain of $F$ to a ball of radius $r$. There is a constant $K$ such that for $\|\xi\|_{W^{1,p}} < r$, we have $\| \diff (\exp \xi)\|_{L^p} \leq K$. It then follows from \eqref{J-J'-difference} that there is a constant $C$ such that all $\xi$ with $\| \xi\|_{W^{1,p}} < r$,
\begin{equation}\label{C0-control}
\| F_{J'}(\xi) - F_{J}(\xi) \|_{L^p} \leq C \| J' - J\|_{C^0}
\end{equation}

Next we differentiate \eqref{J-J'-difference} at $\xi$ in the direction of $\eta \in W^{1,p}(\Sigma, u^*TZ)$. The fact that $\xi$ appears in three places in \eqref{J-J'-difference} means there are three terms in the derivative. Schematically we can write it as
\begin{align*}
D_\xi(F_{J'} - F_J)(\eta)
	&=
		P_\xi(\eta) \big( (J'-J)(\exp \xi) \circ \diff(\exp \xi) \circ j \big)\\
	&\quad \quad
		+ \frac{1}{2}\Phi_\xi^{-1} \big( Q_\xi(\eta) \circ \diff (\exp \xi) \circ j \big)\\
	&\quad \quad \quad \quad
		+ \frac{1}{2} \Phi_\xi^{-1} \big( (J'-J)(\exp \xi) \circ R_{\xi}(\eta) \circ j \big)
\end{align*}
where $P_\xi(\eta)$ is the derivative of $\Phi_\xi^{-1}$, $Q_\xi(\eta)$ is the derivative of $(J'-J)(\xi)$ and $R_\xi(\eta)$ is the derivative of $\diff(\exp \xi)$, all taken at $\xi$ in the direction $\eta$. The terms involving $P_\xi$ and $R_\xi$ can be controlled with just the $C^0$-norm of $J'-J$, but $Q_\xi(\eta)$ sees the first derivative of $J'-J$. All together we see there is a constant $C$ such that for all $\xi$ with  $\|\xi\|_{W^{1,p}} < r$
\begin{equation}\label{C1-control}
\| D_\xi (F_{J'} - F_J)(\eta) \|_{L^p}
\leq
C \| J' - J \|_{C^1} \|\eta \|_{W^{1,p}}
\end{equation}

Inequalities \eqref{C0-control} and \eqref{C1-control} show that 
\[
\|F_{J'} - F_{J} \|_{C^1} \leq C \| J'-J\|_{C^1}
\]
where on the left-hand side the $C^1$-norm is for maps from the ball of radius $r$ in $W^{1,p}(u^*TZ)$ to $L^p(\Lambda^1 \otimes_\R u^*TZ)$. Moreover, the images of the maps are all closed linear subspaces of $L^p$. Now the fact that $F_J$ has a regular zero at the origin implies, by a standard implicit function theorem argument, that when $\|J'-J\|_{C^1}$ is small enough, $F_{J'}$ has a zero near the origin. 
\end{proof}

\subsection{Regularity of holomorphic curves in hyperkähler twistor spaces}\label{regularity-proof}

In this section we will show that a holomorphic 2-sphere in a hyperkähler 4-manifolds lifts to a \emph{regular} holomorphic curve in the twistor space.  To fix notation, we begin by recalling the twistor space $Z$ of a hyperkähler 4-manifold $X$. Write $I_1, I_2, I_3$ for a hyperkähler triple of complex structures on $X$ (satisfying the quaternion relations); then any other hyperkähler structure has the form $I_a =a_1I_1 + a_2I_2 + a_3I_3$ where $a= (a_1,a_2,a_3)$ is a point of the unit sphere $S^2 \subset \R^3$. The twistor space of $X$ is simply $Z = X \times S^2$ and the two twistor almost complex structures are defined as:
\begin{gather*}
T_{(x,a)}(X\times S^2) = T_xX \oplus T_aS^2\\
J_{\pm}(x,a) = \pm I_a \oplus J_{S^2}
\end{gather*}
Given an $I_a$-holomorphic curve $f \colon (\Sigma, j) \to (X,I_a)$ the lift to $Z$ given by $u(\sigma) = (f(\sigma), a)$ is a $J\pm$-holomorphic curve, called the \emph{horizontal lift} of $f$. 

For the rest of this section we fix a choice of $J_\pm$, and denote it by $J$; the arguments are insensitive to this choice. 

\begin{proposition}
Let $f \colon \Sigma \to X$ be a non-constant compact curve in a hyperkähler 4-manifold which is holomorphic for one of the hyperkähler complex structures, $I$. Write $u \colon \Sigma \to Z$ for the horizontal lift. The infinitesimal $J$-holomorphic deformations of $u$ in $Z$ are all given by infinitesimal $I$-holomorphic deformations of $f$ in $X$. 
\end{proposition}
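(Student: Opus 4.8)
The plan is to compute the linearised Cauchy--Riemann operator $D_u$ explicitly using the product structure $Z = X\times S^2$ and the splitting $J = \pm I_a\oplus J_{S^2}$, and to show that its kernel forces the $S^2$-component of any infinitesimal deformation to vanish; the remaining deformation is then automatically an infinitesimal $I$-holomorphic deformation of $f$.

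First I would decompose a deformation as $\xi = (\xi_X,\xi_a)$ with $\xi_X\in\Gamma(f^*TX)$ and $\xi_a\in\Gamma(u^*TS^2)$. Since $J = \pm I_a\oplus J_{S^2}$ is block-diagonal for the product, with the $X$-block $\pm I_a$ depending on the $S^2$-coordinate while the $S^2$-block $J_{S^2}$ is independent of the point of $X$, the operator $\delb_J$ decouples: the $S^2$-component of $\delb_J v$ is $\delb_{J_{S^2}}v_S$ and the $X$-component is $\delb_{\pm I_{v_S}}v_X$. Linearising at $u=(f,a_0)$ gives a triangular operator
$$D_u(\xi_X,\xi_a) = \bigl(D^X_f\xi_X + \tfrac12\,\dot I[\xi_a]\circ \partial f\circ j,\ \delb\,\xi_a\bigr),$$
where $D^X_f$ is the linearised $\delb_I$-operator of $f$ in $X$, $\dot I[\xi_a] = \frac{\partial I_a}{\partial a}\big|_{a_0}[\xi_a]$, and $\delb$ is the ordinary Cauchy--Riemann operator on the \emph{trivial} bundle $u^*TS^2 = \Sigma\times T_{a_0}S^2$. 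Crucially there is no coupling into the $S^2$-component, because $J_{S^2}$ does not depend on $x\in X$. Hence in the kernel $\xi_a$ is a holomorphic section of a trivial line bundle over the compact curve $\Sigma$, so $\xi_a\equiv c$ is constant.

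The whole proposition thus reduces to showing $c=0$, which is where the hyperk�hler structure enters and which I expect to be the main obstacle. Two structural facts are key: each $I_k$ is parallel, so $\kappa:=\dot I[c]=\sum_k c_k I_k$ is a parallel endomorphism of $TX$; and since $c\in T_{a_0}S^2$ is orthogonal to $a_0$ in $\R^3$, the quaternion relations $I_jI_k+I_kI_j=-2\delta_{jk}$ give $I\kappa+\kappa I=0$, so $\kappa$ is $I$-antilinear and carries $f^*T^{1,0}X$ into $f^*T^{0,1}X$. Because $I$ is integrable and K�hler, $D^X_f$ is exactly the Dolbeault operator $\delb$ on the holomorphic bundle $f^*T^{1,0}X$ (the zeroth-order term vanishes as $\nabla I=0$), so $\ker D^X_f = H^0(\Sigma,f^*T^{1,0}X)$. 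I would then test the coupling equation $D^X_f\xi_X = -\tfrac12\,\kappa\circ\partial f\circ j =: -\beta$ against the holomorphic section $\partial f\in H^0(\Sigma,\,f^*T^{1,0}X\otimes K_\Sigma)$, using the parallel holomorphic symplectic form $\Omega=\omega_2+i\omega_3$ (the $(2,0)$-form for $I$) to contract the two $f^*T^{1,0}X$ factors. Since $\Omega$ and $\partial f$ are $\delb$-closed, Stokes' theorem gives $\int_\Sigma\Omega(D^X_f\xi_X\wedge\partial f)=\int_\Sigma\delb\,\Omega(\xi_X,\partial f)=0$ for every $\xi_X$, so solvability forces $\int_\Sigma\Omega(\beta\wedge\partial f)=0$. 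The remaining computation is to evaluate this last integral: writing $\beta$ in terms of $\kappa\circ\partial f$ and using $I\kappa+\kappa I=0$ together with the identities relating $\Omega$, $g$ and the $I_k$, it collapses to a nonzero constant multiple of $|c|$ times the area $\int_\Sigma f^*\omega_I>0$ of the non-constant holomorphic curve $f$ (here $\omega_I=g(I\,\cdot,\cdot)$). This forces $c=0$.

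Finally, with $\xi_a=c=0$ the kernel equation becomes $D^X_f\xi_X=0$, that is, $\xi_X$ is an infinitesimal $I$-holomorphic deformation of $f$, as claimed. The delicate point throughout is the last integral identity: one must keep careful track of the taming sign $\pm$ and of the conventions relating $\Omega$, $g$ and the $I_k$, but the underlying mechanism --- that pairing the obstruction $\beta$ against $\partial f$ recovers the (strictly positive) symplectic area of $f$ --- is robust and independent of these choices.
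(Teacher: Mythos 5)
Your proposal is correct and follows essentially the same route as the paper: first the $S^2$-component of the deformation is shown to be constant (you via the triangular structure of $D_u$, the paper via the holomorphic projection $Z \to S^2$, which is the same observation), and then the constant is killed by pairing the inhomogeneous term against $\partial f$ with the parallel holomorphic symplectic form $\Omega$, integrating by parts, and recognising the resulting integrand as a positive multiple of the area form of the non-constant curve $f$. The only piece you leave unevaluated --- the local computation showing $\Omega(\kappa\circ\partial f\circ j,\partial f)$ is a nonzero multiple of $\diff A$ --- is exactly the coordinate calculation the paper carries out at the end, so there is no gap in substance.
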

\begin{proof}
Without loss of generality, we assume that $f$ is $I_3$-holomorphic. Identify the tangent space at $I_3$ to the hyperkähler sphere with the span of $I_1, I_2$ and write an arbitrary section $\xi \in C^\infty(u^*TZ)$ as  $\xi = (\eta, v)$ where $\eta \in C^{\infty}(f^*TX)$ and $v \colon \Sigma \to \Span(I_1,I_2)$. Suppose that $\xi$ is an infinitesimal $J$-holomorphic deformation of $u$. We first prove that the factor $v$ is constant. By definition of the twistorial almost complex structures, the map $p \colon (Z,J) \to S^2$ given by projection onto the second factor of $Z= X \times S^2$ is holomorphic. It follows that $p_*(\xi)$ is an infinitesimal deformation of $p \circ u$ as a holomorphic map $\Sigma \to S^2$. But $p \circ u$ is constant and, since the only holomorphic deformations are through constant maps, $v$ is constant. 

We next prove that $v$ actually vanishes, which will complete the proof of the proposition. Suppose for a contradiction that $v$ is non-zero. Without loss of generality, rescaling $v$ and rotating our axes, we can assume $v = I_2$. Let $J_t$ be a path of hyperkähler complex structures on $X$ with $J_0 = I_3$ and $J'(0) = I_2$. Then $\eta$ is tangent to a path of maps $f_t \colon \Sigma \to X$ which solve, to first order at least, the $J_t$-holomorphic curve equation:
\[
\diff f_t + J_t(f_t)\circ \diff f_t \circ j = 0
\]
Differentiating at $t=0$ and using the fact that $I_2 \circ \diff f \circ j = I_1 \circ \diff f$ (since $f$ is $I_3$-holomorphic) we find
\begin{equation}\label{dolbeault-trivial}
\delb \eta + I_1 \circ \diff f =0
\end{equation}

Away from the zeros of $\diff f$, we define the holomorphic normal bundle $N = f^*TZ/\im \diff f$. We now bring in the complex volume form $\Omega$ on $X$ which we interpret as a holomorphic $1$-form on $\Sigma$ with values in $N^*$. We project \eqref{dolbeault-trivial} onto $N$, pair with $\Omega$ and then integrate to obtain
\[
\int_\Sigma \delb (\pi_N\eta) \wedge \Omega + \pi_N(I_1 \circ \diff f) \wedge \Omega =0
\]
At first sight the integrand only makes sense, like the normal bundle, away from the zeros of $\diff f$, but \eqref{dolbeault-trivial} shows that at the zeros of $\diff f$, the form $\delb \eta$ also vanishes and so the integrand actually vanishes at the zeros of $\diff f$. Now integration by parts, together with Stokes's theorem and the fact that $\delb \Omega = 0$ implies that the first term integrates to zero leaving
\begin{equation}\label{area-zero}
\int_\Sigma \pi_N(I_1 \circ \diff f) \wedge \Omega = 0
\end{equation}

We now investigate this integrand. Work away from the zeros of $\diff f$ and choose holomorphic coordinates $z$ near $\sigma \in \Sigma$ and $I_3$-holomorphic coordinates $(z,w)$ near $f(\sigma) \in X$ such that $f(z) = (z,0)$. Write $z = x+iy$, $w= p+iq$. We can arrange things so that \emph{at the point $f(\sigma)$} we have that the hyperkähler structure is standard, identified with the quaternions $x+iy+jp+kq$, with $\Omega = \diff z \wedge \diff w$, $I_1$ corresponding to multiplication by $j$, $I_2$ to multiplication by $k$ and $I_3$ to multiplication by $i$. With this choice of coordinates, one checks that $I_1 \circ \diff f = \diff x \otimes \del_p - \diff y \otimes \del_q$ (at $\sigma$) and so
\[
 \pi_N(I_1 \circ \diff f) \wedge \Omega = 2 i \diff x \wedge \diff y = 2i \diff A
\]
 where $\diff A$ is the area form on $\Sigma$ induced by the pull-back of the Riemannian metric from $X$ via $f$. This gives us our contradiction, since \eqref{area-zero} now implies that the image of $f$ has zero area and so $f$ is constant, contrary to hypothesis.
\end{proof}

\begin{corollary}\label{spheres-are-regular}
Let $f \colon \Sigma \to X$ be a non-constant compact curve in a hyperkähler 4-manifold which is holomorphic for one of the hyperkähler complex structures. The horizontal lift $u$ to twistor space is regular for $J_+$ or $J_-$ if and only if $\Sigma$ has genus zero. 
\end{corollary}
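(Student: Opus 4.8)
The plan is to read off regularity—that is, surjectivity of the linearised Cauchy--Riemann operator $D_u$ at the horizontal lift $u$—from two ingredients: the \emph{kernel} of $D_u$, which the preceding Proposition has already computed, and the Fredholm \emph{index} of $D_u$, which I would compute by Riemann--Roch. Since the Proposition identifies the infinitesimal $J$-holomorphic deformations of $u$ with the infinitesimal $I$-holomorphic deformations of $f$, and $I$ is integrable, $\ker D_u$ is exactly the space of honest holomorphic sections $H^0(\Sigma, f^*TX)$; in particular it contains $H^0(\Sigma, T\Sigma)$, embedded via the derivative $\diff f$ (the reparametrisation fields). Once $\ind_\R D_u$ is known, surjectivity is equivalent to $\dim_\R \ker D_u = \ind_\R D_u$, so everything reduces to comparing $h^0(f^*TX)$ with the index.

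For the index I would use the standard formula $\ind_\R D_u = n\,\chi(\Sigma) + 2\langle c_1(TZ,J), u_*[\Sigma]\rangle$ with $n = \dim_\C Z = 3$, the point being that the Chern-number term vanishes. Over the horizontal lift the vertical bundle $u^*V$ is the constant bundle $\Sigma \times T_aS^2$, hence trivial of degree zero, while the horizontal part is $f^*TX$ with complex structure $\pm I_a$; and because $X$ is hyperk\"ahler its canonical bundle is holomorphically trivialised by the complex volume form $\Omega$, so $c_1(TX,I) = 0$ and $f^*TX$ has degree zero. Thus $\langle c_1(TZ,J), u_*[\Sigma]\rangle = 0$ and $\ind_\R D_u = 3\chi(\Sigma) = 6 - 6g$. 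I would take a little care to confirm that replacing $I_a$ by $-I_a$ for the other sign of $J_\pm$, and the non-integrability of $J$ on $Z$, leave this purely topological count unchanged.

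With these two facts the conclusion follows. When $g \ge 1$ the index $6-6g$ is $\le 0$; a surjective Fredholm operator has $\dim\ker = \ind \ge 0$, so surjectivity would force $g=1$ and $\ker D_u = 0$. But $H^0(\Sigma,T\Sigma) \hookrightarrow \ker D_u$ is non-zero when $g = 1$ (there $T\Sigma \cong \O_\Sigma$), so $D_u$ is not regular for any $g \ge 1$. When $g = 0$ the index is $6$, so $D_u$ is regular precisely when $h^0(f^*TX) = 3$. Here I would invoke adjunction: for a rational curve in the complex-symplectic surface $(X,I)$, triviality of $K_X$ gives normal bundle $N = K_\Sigma = \O(-2)$, so the normal sequence reads $0 \to \O(2) \to f^*TX \to \O(-2)\to 0$, which splits since $\mathrm{Ext}^1(\O(-2),\O(2)) = H^1(\P^1,\O(4)) = 0$. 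Hence $f^*TX \cong \O(2)\oplus\O(-2)$, $h^0 = 3$, and $\dim_\R\ker D_u = 6 = \ind_\R D_u$, so the cokernel vanishes and $u$ is regular. (It is worth noting that the curve is \emph{obstructed} in $X$, with $H^1(f^*TX) = \C$, yet unobstructed in $Z$; the coupling term in $D_u$—precisely the area computation of the previous Proposition—is what trivialises the would-be cokernel.)

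The main obstacle sits in the genus-zero case and is twofold. First, one must check that the kernel supplied by the Proposition consists of genuine holomorphic sections, so that $h^0(f^*TX)$ is the correct count; this is exactly where integrability of $I$ is used. Second, and more substantively, the identity $h^0(f^*TX)=3$ rests on correctly identifying the normal bundle, where the hyperk\"ahler hypothesis re-enters through $K_X \cong \O_X$. For this step one should assume $f$ is an embedding (or at least an unramified immersion), so that the normal sequence is exact with $N = \O(-2)$: for a ramified or multiply covered rational curve the splitting type of $f^*TX$ jumps and regularity genuinely fails, so the statement is really about the embedded curves that arise in the application.
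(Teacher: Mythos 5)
Your argument is essentially the paper's: both compute the index as $6-6g$ from the vanishing of $u^*c_1(TZ)$, dispose of $g\ge 1$ by the sign of the index together with the reparametrisation fields, and for $g=0$ use the preceding Proposition to identify the kernel with the $I$-holomorphic deformations of $f$, counted as $6$ real dimensions (your $h^0(\O(2)\oplus\O(-2))=3$ is the same count as the paper's ``rigid plus $\PSL(2,\C)$''). Your closing caveat that the $g=0$ case needs $f$ to be an immersion with normal bundle $\O(-2)$ (multiple covers would inflate $h^0(f^*TX)$ past the index) is a fair point that the paper passes over silently, but it is harmless for the intended application to embedded $-2$-curves.
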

\begin{proof}
We begin by computing the index of the linearised Cauchy--Riemann equations. Note that $c_1(Z,J_+)$ is pulled back from $S^2$ (see \cite{Atiyah:1978aa}), whilst $c_1(Z,J_-)=0$ (see \cite{Fine:2009aa}) and so in both cases $u^*c_1$ vanishes. This means that the (real) index is $6-6g$ where $g$ is the genus of $\Sigma$. Already when $g >1$ the fact that the index is negative means the curve cannot be regular. When $g=1$, the index vanishes, yet there are non-trivial deformations coming from reparametrisations of $u$ given by biholomorphisms of $\Sigma$. Finally, when $g=0$, we use the previous result, that the infinitesimal $J$-holomorphic deformations of $u$ in $Z$ are equal to the holomorphic deformations of $f$ in $X$ keeping the complex structure on $X$ fixed. Since the normal bundle of $f$ is $\mathcal O(-2)$ the curve is geometrically rigid and the only deformations come from reparametrisations, i.e., precomposing with elements of $\PSL(2,\C)$.  This is a space of real dimension 6, equal to the index and so we conclude that the linearised Cauchy--Riemann operator is surjective in this case. 
\end{proof}

\begin{remark}
One might hope to account for the zero or negative index in the cases $g \geq 1$ by allowing the complex structure on the domain to vary (since the index in these cases equals the dimension of the moduli space of complex structures on the curve). Unfortunately this won't help in general, since experience with K3 surfaces leads one to expect the curve downstairs $f \colon \Sigma \to X$ to have non-trivial holomorphic deformations. 
\end{remark}

\subsection{Completing the proof of Theorem~\ref{main-result}}\label{details}

We now put the pieces together to complete the proof of Theorem~\ref{main-result}. By hypothesis, $(M_i,g_i)$ is a sequence of Riemannian manifolds with tame twistor spaces, converging in the pointed $C^2$-topology to a locally hyperkähler limit $X$. We assume, by passing to a subsequence, that either all the $M_i$ have $J_+$ tamed, or all have $J_-$ tamed. We focus attention on this choice of twistor almost complex structure from now on. Assume for a contradiction that the universal cover $\widetilde{X}$ contains a 2-sphere $S \subset \widetilde{X}$ which is $I$-holomorphic for one of the hyperkähler complex structures $I$. Let $U \subset \widetilde{X}$ be a neighbourhood of $S$ and $\widetilde{K} \subset \widetilde{X}$ a compact set containing $U$. Write $K \subset X$ for the image of $\widetilde{K}$. Denote by $(Z, J, \omega) \to K$ and $(Z_i, J_{Z_i}, \omega_{Z_i}) \to M_i$ the twistor spaces with their almost complex structures and closed 2-forms. By Lemma~\ref{twistor-spaces-converge}, there are fibrewise diffeomorphisms $\phi_i \colon Z \to Z_i$ such that $J_i = \phi_i^*J_{Z_i}$ converges to $J$ in $C^1$. Moreover, if we put $\omega_i= \phi^*\omega_{Z_i}$ then $[\omega_i] = [\omega]$ and $J_i$ is tamed by $\omega_i$. Pulling back to the twistor space $\widetilde{Z} \to \widetilde{K}$, we get a sequence $(\widetilde{\omega}_i, \widetilde{J}_i)$ of tamed almost complex structures which converge to the twistor almost complex structure $\widetilde{J}$ of $\widetilde{Z}$. We also have $[\widetilde{\omega}_i] = [\widetilde{\omega}]$, where $\widetilde{\omega}$ is the standard closed 2-form on twistor space, in this case the pull-back to $\widetilde{Z}$ of the area form on $S^2$ by the projection $\widetilde{Z} \to S^2$ to the sphere of hyperkähler complex structures.

Write $S'$ for the horizontal lift of $S$ to $\widetilde{Z}$, which is a $J$-holomorphic curve. By Proposition~\ref{perturbing-regular-curves} and Corollary~\ref{spheres-are-regular}, for all large $i$ there is a $\widetilde{J}_i$-holomorphic sphere $S'_i\subset \widetilde{Z}$ which is homotopic to $S'$. On the one hand, $\int_{S'_i} \widetilde{\omega}_i >0$, since $\widetilde{\omega}_i$ tames $\widetilde{J}_i$. On the other hand, $\int_{S'_i} \widetilde{\omega}_i = \int_{S'} \widetilde{\omega} =0$, since $[S'_i] = [S']$ and $[\widetilde{\omega}_i] = [\widetilde{\omega}]$. This contradiction completes the proof.

\bibliographystyle{siam}
\bibliography{limits_bibliography}

\bigskip
{ \footnotesize
\textsc{J.~Fine, Département de mathématiques, Université libre de Bruxelles, Brussels 1050, Belgium}\par\nopagebreak
\textit{E-mail address:} \texttt{\href{mailto:joel.fine@ulb.ac.be}{joel.fine@ulb.ac.be}}
}

\end{document}